%2multibyte Version: 5.50.0.2960 CodePage: 1254

\documentclass[11pt,reqno]{amsart}
%%%%%%%%%%%%%%%%%%%%%%%%%%%%%%%%%%%%%%%%%%%%%%%%%%%%%%%%%%%%%%%%%%%%%%%%%%%%%%%%%%%%%%%%%%%%%%%%%%%%%%%%%%%%%%%%%%%%%%%%%%%%%%%%%%%%%%%%%%%%%%%%%%%%%%%%%%%%%%%%%%%%%%%%%%%%%%%%%%%%%%%%%%%%%%%%%%%%%%%%%%%%%%%%%%%%%%%%%%%%%%%%%%%%%%%%%%%%%%%%%%%%%%%%%%%%
\usepackage{amsfonts}
\usepackage{amssymb,amsmath}
\usepackage{amscd}
\usepackage{hyperref}
\usepackage{graphicx}
\usepackage{epsfig}

\setcounter{MaxMatrixCols}{10}
%TCIDATA{OutputFilter=LATEX.DLL}
%TCIDATA{Version=5.50.0.2960}
%TCIDATA{Codepage=1254}
%TCIDATA{<META NAME="SaveForMode" CONTENT="1">}
%TCIDATA{BibliographyScheme=Manual}
%TCIDATA{LastRevised=Thursday, April 04, 2019 14:03:23}
%TCIDATA{<META NAME="GraphicsSave" CONTENT="32">}
%TCIDATA{Language=American English}

\newtheorem{theorem}{Theorem}[section]

\newtheorem{example}{Example}[section]

\numberwithin{equation}{section}
\textwidth145mm

\begin{document}

%%%%%% TO BE ENTERED BY THE AUTHOR(S)
%%%
%%% ENTER TITLE
\title{On $\mathcal{C}$-Parallel Legendre Curves
in Non-Sasakian Contact Metric Manifolds}

%%% AUTHOR(S) FULL NAMES, AND EMAIL ADDRESSES
%%%
\author[affil]{Cihan \"{O}zg\"{u}r}
\email{cozgur@balikesir.edu.tr}
%%% ENTER AUTHOR(S) AFFILIATION(S)
\address{Bal\i kesir University, Department of Mathematics, 10145, Bal\i kesir, TURKEY}
%%% AND CORRESPONDINGLY FOR OTHER AUTHORS, IF THERE ARE MORE AUTHORS
%%% ENTER ABBREVIATED AUTHOR(S) NAMES FOR PAGE HEADINGS
\newcommand{\AuthorNames}{C. \"{O}ZG\"{U}R}
%%% IF THERE ARE MORE THAN TWO AUTHORS WRITE
%%% \newcommand{\AuthorNames}{First Author et al.}
%%%

%%% ENTER MSC, KEYWORDS, RECEIVED, EDITOR, THANKS FOR FINANCIAL SUPPORT FOR RESEARCH
\newcommand{\FilMSC}{Primary 53C25; Secondary 53C40, 53A05}
\newcommand{\FilKeywords}{Contact metric manifold, Legendre
curve, $\mathcal{C}$-parallel mean curvature vector field, $\mathcal{C}$%
-proper mean curvature vector field}

%\newcommand{\FilSupport}{Research supported by ... (optionally)}
%%% If you do not want to thank for the financial support of the research, remove
%%% the previous line (i.e., leave \FilSupport undefined)
%%%%%%%%%%%%%%%%%%%%%%%%%%%%%%%%%%%%%%%%%%%%%%%%%%

\begin{abstract}
In $(2n+1)$-dimensional non-Sasakian contact metric
manifolds, we consider Legendre curves whose mean curvature vector fields
are $\mathcal{C}$-parallel or $\mathcal{C}$-proper in the tangent or normal
bundles. We obtain the curvature characterizations of these curves. Moreover,
we give some examples of these kinds of curves which satisfy the conditions
of our results.
\end{abstract}

\maketitle

%%%%%% THIS PART MUST BE PLACED IMMEDIATELY AFTER THE \maketitle COMMAND
%%%%%% BACK TO ORIGINAL FOOTNOTES
\makeatletter
\renewcommand\@makefnmark%
{\mbox{\textsuperscript{\normalfont\@thefnmark)}}}
\makeatother
%%%%%%

\section{Introduction} \label{sect-introduction}

In \cite{Chen-89} and \cite{Chen-95}, Chen studied submanifolds whose mean
curvature vector fields $H$ satisfy the condition $\Delta H=\lambda H,$
where $\lambda $ is a non-zero differentiable function on the submanifold
and $\Delta $ denotes the Laplacian. Later, in \cite{Arroyo}, Arroyo, Barros
and Garay defined the notion of a submanifold with a proper mean curvature
vector field $H$ in the normal bundle as a submanifold whose mean curvature
vector field $H$ satisfies the condition $\Delta ^{\perp }H=\lambda H$,
where $\Delta ^{\perp }$ denotes the Laplacian in the normal bundle.
Furthermore, when the mean curvature vector field $H$ of the submanifold
satisfies the condition $\Delta H=\lambda H,$ they called the submanifold as
a \textit{submanifold with a proper mean curvature vector field}. In a
Riemannian space form, curves with a proper mean curvature vector field in
the tangent and normal bundles were studied in \cite{Arroyo}. In \cite{KA},
K\i l\i \c{c} and Arslan studied Euclidean submanifolds satisfying $\Delta
^{\perp }H=\lambda H.$  In \cite{KH}, \ Kocayi\u{g}it and Hac\i saliho\u{g}lu
studied curves satisying $\Delta H=\lambda H$ in a 3-dimensional Riemannian
manifold. For Legendre curves in Sasakian manifolds, same problems were
studied by Inoguchi in \cite{Inoguchi-2004}. In \cite{BB-92}, Baikoussis and
Blair considered submanifolds in Sasakian space forms $M(c)=(M,\varphi ,\xi
,\eta ,g)$. They defined the mean curvature vector field $H$ as \textit{$%
\mathcal{C}$-parallel }if $\nabla H=\lambda \xi ,$ where $\lambda $ is a
non-zero differentiable function on $M$ and $\nabla $ the induced
Levi-Civita connection. Later, in \cite{LSL}, Lee, Suh and Lee studied
curves with $\mathcal{C}$-parallel and $\mathcal{C}$-proper mean curvature
vector fields in the tangent and normal bundles. A curve $\gamma $ has \textit{$%
\mathcal{C}$-parallel mean curvature vector field }$H$\textit{\ }if $\nabla
_{T}H=\lambda \xi $, $\mathcal{C}$\textit{-proper mean curvature vector
field }$H$ if $\Delta H=\lambda \xi $, $\mathcal{C}$\textit{-parallel mean
curvature vector field }$H$\textit{\ in the normal bundle }if $\nabla
_{T}^{\perp }H=\lambda \xi $, $\mathcal{C}$\textit{-proper mean curvature
vector field }$H$ \textit{in the normal bundle} if $\Delta ^{\perp
}H=\lambda \xi $, where $\lambda $ is a non-zero differentiable function
along the curve $\gamma $, $T$ the unit tangent vector field of $\gamma $, $%
\nabla $ the Levi-Civita connection, $\nabla ^{\perp }$ the normal
connection \cite{LSL}.

Let $M=(M,\varphi ,\xi ,\eta ,g)$ be a contact metric manifold and $\gamma
:I\rightarrow M$ a Frenet curve in $M$ parametrized by the arc-length
parameter $s$. The contact angle $\alpha (s)$ is a function defined by $%
cos[\alpha (s)]=g(T(s),\xi )$. If $\alpha (s)$ is a
constant, then the curve is called a \textit{slant curve} \cite{CIL}. If $%
\alpha (s)=$ $\frac{\pi }{2},$ then $\gamma $ is called a \textit{%
Legendre curve }\cite{Blair}.

In \cite{LSL}, Lee, Suh and Lee studied slant curves with $\mathcal{C}$%
-parallel and $\mathcal{C}$-proper mean curvature vector fields in Sasakian
3-manifolds. In \cite{GO}, G\"{u}ven\c{c} and the present author studied $%
\mathcal{C}$-parallel and $\mathcal{C}$-proper slant curves in $(2n+1)$%
-dimensional trans-Sasakian manifolds. Since the paper \cite{GO} includes
the Legendre curves in Sasakian manifolds, in the present paper, we consider $%
\mathcal{C}$-parallel and $\mathcal{C}$-proper Legendre curves in $(2n+1)$%
-dimensional non-Sasakian contact metric manifolds.

The paper is organized as follows: In Section \ref{sect-cparallel} and
Section \ref{sect-cproper}, in non-Sasakian contact metric manifolds, we
consider Legendre curves with $\mathcal{C}$-parallel and $\mathcal{C}$%
-proper mean curvature vector fields, respectively. In the final section, we
give some examples of Legendre curves which support our theorems.

\section{Legendre Curves with $\mathcal{C}$-parallel Mean
Curvature Vector Fields} \label{sect-cparallel}

Let $M=(M,\varphi ,\xi ,\eta ,g)$ be a contact metric manifold. The contact metric structure of $M$ is said to be \textit{normal} if%
\begin{equation*}
\lbrack \varphi ,\varphi ](X,Y)=-2d\eta (X,Y)\xi ,
\end{equation*}%
where $[\varphi ,\varphi ]$ denotes the Nijenhuis torsion of $\varphi $ and $%
X,Y$ are vector fields on $M$.
 A normal contact metric manifold is called a
\textit{Sasakian manifold }\cite{Blair}.

Given a contact Riemannian manifold $M$, the operator $h$ is defined by $h=%
\frac{1}{2}\left( L_{\xi }\varphi \right) $, where $L$ denotes the Lie
differentiation. The operator $h$ is self adjoint and satisfies
\begin{equation*}
\begin{array}{ccc}
h\xi =0 & \text{and} & h\varphi =-\varphi h,%
\end{array}%
\end{equation*}%
\begin{equation}
\nabla _{X}\xi =-\varphi X-\varphi hX.  \label{nablaXksi}
\end{equation}%
In a Sasakian manifold, it is clear that
\begin{equation*}
\nabla _{X}\xi =-\varphi X.
\end{equation*}%
For more details about contact metric manifolds and their submanifolds, we
refer to \cite{Blair} and \cite{Yano}.

Let $\left( M,g\right) $ be an $n$-dimensional Riemannian manifold. A
unit-speed curve $\gamma :I\rightarrow M$ is said to be \textit{a Frenet
curve of osculating order }$r$, if there exists positive functions $%
k_{1},...,k_{r-1}$ on $I$ satisfying
\begin{eqnarray*}
T &=&\upsilon _{1}=\gamma ^{\prime }, \\
\nabla _{T}T &=&k_{1}\upsilon _{2}, \\
\nabla _{T}\upsilon _{2} &=&-k_{1}T+k_{2}\upsilon _{3}, \\
&&... \\
\nabla _{T}\upsilon _{r} &=&-k_{r-1}\upsilon _{r-1},
\end{eqnarray*}%
where $1\leq r\leq n$ and $T,\upsilon _{2},...,\upsilon _{r}$ are a $g$%
-orthonormal vector fields along the curve. The positive functions $%
k_{1},...,k_{r-1}$ are called\textit{\ curvature functions }and $\left\{
T,\upsilon _{2},...,\upsilon _{r}\right\} $ is called the \textit{Frenet
frame field}. A \textit{geodesic} is a Frenet curve of osculating order $%
r=1. $ A \textit{circle} is a Frenet curve of osculating order $r=2$ with a
constant curvature function $k_{1}$. A \textit{helix of order} $r$ is a
Frenet curve of osculating order $r$ with constant curvature functions $%
k_{1},...,k_{r-1}$. A helix of order $3$ is simply called a \textit{helix}.

Now let $(M,g)$ be a Riemannian manifold and $\gamma :I\rightarrow M$ a unit
speed Frenet curve of osculating order $r.$ By a simple calculations, it can
be easily seen that%
\begin{equation*}
\nabla _{T}\nabla _{T}T=-k_{1}^{2}\upsilon _{1}+k_{1}^{\prime }\upsilon
_{2}+k_{1}k_{2}\upsilon _{3},
\end{equation*}%
\begin{eqnarray*}
\nabla _{T}\nabla _{T}\nabla _{T}T &=&-3k_{1}k_{1}^{\prime }T+\left(
k_{1}^{\prime \prime }-k_{1}^{3}-k_{1}k_{2}^{2}\right) \upsilon _{2} \\
&&+\left( 2k_{1}^{\prime }k_{2}+k_{1}k_{2}^{\prime }\right) \upsilon
_{3}+k_{1}k_{2}k_{3}\upsilon _{4},
\end{eqnarray*}%
\begin{equation*}
\nabla _{T}^{\perp }\nabla _{T}^{\perp }T=k_{1}^{\prime }\upsilon
_{2}+k_{1}k_{2}\upsilon _{3},
\end{equation*}%
\begin{equation*}
\nabla _{T}^{\perp }\nabla _{T}^{\perp }\nabla _{T}^{\perp }T=\left(
k_{1}^{\prime \prime }-k_{1}k_{2}^{2}\right) \upsilon _{2}+\left(
2k_{1}^{\prime }k_{2}+k_{1}k_{2}^{\prime }\right) \upsilon
_{3}+k_{1}k_{2}k_{3}\upsilon _{4},
\end{equation*}%
(see \cite{GO}). Then we have
\begin{equation}
\nabla _{T}H=-k_{1}^{2}T+k_{1}^{\prime }\upsilon _{2}+k_{1}k_{2}\upsilon
_{3},  \label{nablaH}
\end{equation}%
\begin{eqnarray}
\Delta H &=&-\nabla _{T}\nabla _{T}\nabla _{T}T  \notag \\
&=&3k_{1}k_{1}^{\prime }T+\left( k_{1}^{3}+k_{1}k_{2}^{2}-k_{1}^{\prime
\prime }\right) \upsilon _{2}  \notag \\
&&-(2k_{1}^{\prime }k_{2}+k_{1}k_{2}^{\prime })\upsilon
_{3}-k_{1}k_{2}k_{3}\upsilon _{4},  \label{deltaH}
\end{eqnarray}%
\begin{equation}
\nabla _{T}^{\perp }H=k_{1}^{\prime }\upsilon _{2}+k_{1}k_{2}\upsilon _{3},
\label{nablaTH}
\end{equation}%
\begin{eqnarray}
\Delta ^{\perp }H &=&-\nabla _{T}^{\perp }\nabla _{T}^{\perp }\nabla
_{T}^{\perp }T  \notag \\
&=&\left( k_{1}k_{2}^{2}-k_{1}^{\prime \prime }\right) \upsilon _{2}-\left(
2k_{1}^{\prime }k_{2}+k_{1}k_{2}^{\prime }\right) \upsilon _{3}  \notag \\
&&-k_{1}k_{2}k_{3}\upsilon _{4},  \label{deltaTH}
\end{eqnarray}%
(see \cite{Arroyo}).

\bigskip Let $\gamma :I\subseteq
%TCIMACRO{\U{211d} }%
%BeginExpansion
\mathbb{R}
%EndExpansion
\rightarrow M$ be a non-geodesic Frenet curve in a contact metric manifold $%
M $. From \cite{GO}, we give the following relations:

$i)$ $\gamma $ is a curve with $\mathcal{C}$-parallel mean curvature vector
field $H$ if and only if%
\begin{equation}
-k_{1}^{2}T+k_{1}^{\prime }\upsilon _{2}+k_{1}k_{2}\upsilon _{3}=\lambda \xi
;\text{ or}  \label{cparallel}
\end{equation}

$ii)$ $\gamma $ is a curve with $\mathcal{C}$-proper mean curvature vector
field $H$ if and only if%
\begin{equation}
3k_{1}k_{1}^{\prime }T+\left( k_{1}^{3}+k_{1}k_{2}^{2}-k_{1}^{\prime \prime
}\right) \upsilon _{2}-(2k_{1}^{\prime }k_{2}+k_{1}k_{2}^{\prime })\upsilon
_{3}-k_{1}k_{2}k_{3}\upsilon _{4}=\lambda \xi \text{; or}  \label{cproper}
\end{equation}

$iii)$ $\gamma $ is a curve with $\mathcal{C}$-parallel mean curvature
vector field $H$ in the normal bundle if and only if%
\begin{equation}
k_{1}^{\prime }\upsilon _{2}+k_{1}k_{2}\upsilon _{3}=\lambda \xi \text{; or}
\label{cparallelNORMAL}
\end{equation}

$iv)$ $\gamma $ is a curve with $\mathcal{C}$-proper mean curvature vector
field $H$ in the normal bundle if and only if%
\begin{equation}
\left( k_{1}k_{2}^{2}-k_{1}^{\prime \prime }\right) \upsilon _{2}-\left(
2k_{1}^{\prime }k_{2}+k_{1}k_{2}^{\prime }\right) \upsilon
_{3}-k_{1}k_{2}k_{3}\upsilon _{4}=\lambda \xi ,  \label{cproperNORMAL}
\end{equation}
where $\lambda $ is a non-zero differentiable function along the curve $%
\gamma .$

Now, let $\gamma :I\subseteq
%TCIMACRO{\U{211d} }%
%BeginExpansion
\mathbb{R}
%EndExpansion
\rightarrow M$ be a non-geodesic Legendre curve of osculating order $r$ in an $n$%
-dimensional contact metric manifold. By the use of the definition of a
Legendre curve and (\ref{nablaXksi}), we have
\begin{equation}
\eta (T)=0,  \label{a}
\end{equation}%
\begin{equation}
\nabla _{T}\xi =-\varphi T-\varphi hT.  \label{c}
\end{equation}%
Differentiating (\ref{a}) and using (\ref{c}), we obtain%
\begin{equation}
k_{1}\eta (\upsilon _{2})=g(T,\varphi hT).  \label{etaE2}
\end{equation}

If the osculating order $r=2,$ then we have the following results:

\begin{theorem}
\label{propcaseIcparallelTANGENT}There does not exist a non-geodesic
Legendre curve $\gamma :I\subseteq
%TCIMACRO{\U{211d} }%
%BeginExpansion
\mathbb{R}
%EndExpansion
\rightarrow M$ of osculating order $2$, which has $\mathcal{C}$-parallel
mean curvature vector field in a contact metric manifold $M$.
\end{theorem}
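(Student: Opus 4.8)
The plan is to argue by contradiction, working directly from the characterization (\ref{cparallel}). Suppose, to the contrary, that such a non-geodesic Legendre curve $\gamma$ of osculating order $2$ exists. Since the osculating order is $2$, the Frenet frame is just $\{T,\upsilon_2\}$ and there is no third vector field $\upsilon_3$, so effectively $k_2=0$ in all the formulas of Section \ref{sect-cparallel}. Hence the condition (\ref{cparallel}) for a $\mathcal{C}$-parallel mean curvature vector field collapses to
\begin{equation*}
-k_1^2 T + k_1^{\prime}\upsilon_2 = \lambda \xi,
\end{equation*}
where $\lambda$ is a non-zero differentiable function along $\gamma$.

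The crucial step is to exploit the Legendre condition to kill the tangential part of the right-hand side. By (\ref{a}) we have $\eta(T)=g(T,\xi)=0$. Taking the $g$-inner product of the displayed equation with $T$ and using the orthonormality of the Frenet frame, namely $g(T,T)=1$ and $g(\upsilon_2,T)=0$, together with $g(\xi,T)=\eta(T)=0$, I would obtain
\begin{equation*}
-k_1^2 = \lambda\,\eta(T) = 0.
\end{equation*}
Thus $k_1=0$ identically along $\gamma$.

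This is the desired contradiction: a non-geodesic Frenet curve has $k_1$ as a \emph{positive} curvature function, so $k_1\equiv 0$ is impossible, and no such curve can exist. I do not anticipate any genuine obstacle in this argument; it rests entirely on the single observation that the Legendre condition forces the tangential component of $\lambda\xi$ to vanish, whereas the left-hand side of (\ref{cparallel}) always carries the nonzero tangential term $-k_1^2 T$. Note in particular that the conclusion is independent of whether $\lambda$ is assumed non-zero. The only point requiring a word of justification is the reduction $k_2=0$ in osculating order $2$, which is immediate from the definition of a Frenet curve of osculating order $r$ given above.
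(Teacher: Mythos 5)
Your argument is correct and coincides with the paper's own proof: both reduce (\ref{cparallel}) to $-k_1^2T+k_1'\upsilon_2=\lambda\xi$ for osculating order $2$, take the inner product with $T$, and use the Legendre condition $\eta(T)=0$ to force $k_1=0$, contradicting the non-geodesic hypothesis. The paper states this more tersely, but the substance is identical.
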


\begin{proof}
Assume that $\gamma $ have $\mathcal{C}$-parallel mean curvature vector
field. From (\ref{cparallel}), we have%
\begin{equation}
-k_{1}^{2}T+k_{1}^{\prime }\upsilon _{2}=\lambda \xi .  \label{caseIeq}
\end{equation}%
Then taking the inner product of (\ref{caseIeq}) with $T$, we find $k_{1}=0$%
, this means that $\gamma $ is a geodesic. This completes the proof.
\end{proof}

In the normal bundle, we can state the following theorem:

\begin{theorem}
\label{case1normalbundle}Let $\gamma :I\subseteq
%TCIMACRO{\U{211d} }%
%BeginExpansion
\mathbb{R}
%EndExpansion
\rightarrow M$ be a non-geodesic Legendre curve of osculating order $2$ in a
non-Sasakian contact metric manifold. Then $\gamma $ has $\mathcal{C}$%
-parallel mean curvature vector field in the normal bundle if and only if
\begin{equation}
k_{1}=\pm g(\varphi hT,T),\text{ }\xi =\pm \upsilon _{2},\text{ }\lambda
=k_{1}^{\prime }.  \label{b4}
\end{equation}
\end{theorem}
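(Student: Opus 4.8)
The plan is to start from the characterization of a $\mathcal{C}$-parallel mean curvature vector field in the normal bundle given in $(\ref{cparallelNORMAL})$. For an osculating order $r=2$ Legendre curve, the Frenet apparatus terminates at $\upsilon_2$, so $k_2=0$ and the vector field $\upsilon_3$ does not appear. Hence $(\ref{cparallelNORMAL})$ collapses to the single relation
\begin{equation*}
k_1^{\prime}\,\upsilon_2=\lambda\xi,
\end{equation*}
which I would record first. The whole proof then amounts to extracting geometric information from this one vector equation together with the Legendre structure equations $(\ref{a})$, $(\ref{c})$ and $(\ref{etaE2})$.

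First I would take the inner product of $k_1^{\prime}\upsilon_2=\lambda\xi$ with $\xi$ itself. Since $\gamma$ is a Legendre curve, $\eta(T)=g(T,\xi)=0$ by $(\ref{a})$, and by $(\ref{etaE2})$ we have $k_1\eta(\upsilon_2)=g(T,\varphi hT)$. The inner product gives $k_1^{\prime}\,\eta(\upsilon_2)=\lambda\,g(\xi,\xi)=\lambda$ (normalizing $\eta(\xi)=1$), while taking the norm of both sides yields $|k_1^{\prime}|=|\lambda|$ since $\upsilon_2$ and $\xi$ are unit vectors. Combining these two facts forces $\eta(\upsilon_2)=\pm 1$, and because $\upsilon_2$ and $\xi$ are both unit length this pins down $\xi=\pm\upsilon_2$, i.e. $\xi$ is collinear with the principal normal. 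The sign convention then gives $\lambda=k_1^{\prime}$ after matching signs, which is the third relation in $(\ref{b4})$.

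With $\xi=\pm\upsilon_2$ established, I would substitute back into the Legendre relation $(\ref{etaE2})$. Since $\eta(\upsilon_2)=g(\upsilon_2,\xi)=\pm 1$, equation $(\ref{etaE2})$ reads $k_1(\pm 1)=g(T,\varphi hT)$, and using the self-adjointness of $\varphi h$ (or symmetry of the relevant tensor, which follows from $h$ being self-adjoint together with $h\varphi=-\varphi h$) this rearranges to $k_1=\pm g(\varphi hT,T)$, the first relation in $(\ref{b4})$. This completes the forward direction. For the converse, I would assume $(\ref{b4})$ holds and verify directly that $k_1^{\prime}\upsilon_2=\lambda\xi$ is satisfied, which is immediate once $\xi=\pm\upsilon_2$ and $\lambda=k_1^{\prime}$ are inserted; the first equation of $(\ref{b4})$ then guarantees consistency with the Legendre structure via $(\ref{etaE2})$.

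The main obstacle I anticipate is the sign bookkeeping: the relations in $(\ref{b4})$ carry $\pm$ signs that must be matched consistently across the three equations, so the two cases $\xi=\upsilon_2$ and $\xi=-\upsilon_2$ should be tracked carefully rather than absorbed into a single ambiguous $\pm$. A secondary subtlety is justifying that $\xi=\pm\upsilon_2$ rather than merely $\xi\parallel\upsilon_2$ with some scalar factor; this is where the unit-norm conditions on both vectors, together with the non-vanishing of $\lambda$ ensuring $k_1^{\prime}\neq 0$, are essential to rule out the degenerate possibility and fix the coefficient to $\pm 1$.
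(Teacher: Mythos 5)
Your proposal is correct and follows essentially the same route as the paper: read off $\xi=\pm\upsilon_{2}$ and $\lambda=\pm k_{1}^{\prime}$ from $k_{1}^{\prime}\upsilon_{2}=\lambda\xi$ using that both are unit vectors, then obtain $k_{1}=\pm g(\varphi hT,T)$ from the derivative of the Legendre condition. The only cosmetic difference is that you invoke the prepared identity $(\ref{etaE2})$ at the last step, whereas the paper re-differentiates $\xi=\pm\upsilon_{2}$ via the Frenet equation; these are the same computation.
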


\begin{proof}
Let $\gamma $ have $\mathcal{C}$-parallel mean curvature vector field in the
normal bundle. From (\ref{cparallelNORMAL}) we have%
\begin{equation}
k_{1}^{\prime }\upsilon _{2}=\lambda \xi .  \label{b1}
\end{equation}%
So we have%
\begin{equation*}
\lambda =\pm k_{1}^{\prime },
\end{equation*}%
\begin{equation}
\xi =\pm \upsilon _{2}.  \label{b2}
\end{equation}%
Differentiating (\ref{b2}), we find%
\begin{equation}
-\varphi T-\varphi hT=\mp k_{1}T,  \label{b3}
\end{equation}%
which gives us
\begin{equation*}
k_{1}=\pm g(\varphi hT,T).
\end{equation*}%
The converse statement is trivial. Then we complete the proof.
\end{proof}

If\textbf{\ }the osculating order $r\geq 3$, then similar to the proof of
Theorem \ref{propcaseIcparallelTANGENT}, we have the following theorem:

\begin{theorem}
\label{propcase2tangent}There does not exist a non-geodesic Legendre curve $%
\gamma :I\subseteq
%TCIMACRO{\U{211d} }%
%BeginExpansion
\mathbb{R}
%EndExpansion
\rightarrow M$ of osculating order $r\geq 3$, which has $\mathcal{C}$%
-parallel mean curvature vector field in a contact metric manifold $M$.
\end{theorem}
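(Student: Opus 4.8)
The plan is to mirror the strategy used in the proof of Theorem~\ref{propcaseIcparallelTANGENT}, exploiting the fact that the $\mathcal{C}$-parallel condition in the tangent bundle, equation~(\ref{cparallel}), carries a nontrivial component along $T$ that the contact form $\xi$ cannot supply. First I would write down the defining identity for a Legendre curve of osculating order $r\geq 3$: since (\ref{cparallel}) reads
\begin{equation*}
-k_{1}^{2}T+k_{1}^{\prime }\upsilon _{2}+k_{1}k_{2}\upsilon _{3}=\lambda \xi,
\end{equation*}
I would take the inner product of this equation with the unit tangent vector field $T$. Because $\{T,\upsilon_2,\upsilon_3,\dots\}$ is $g$-orthonormal, the left-hand side collapses to $-k_1^2$, while the right-hand side becomes $\lambda\, g(T,\xi)$.

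The key step is then to invoke the Legendre condition itself: by definition a Legendre curve satisfies $\eta(T)=g(T,\xi)=0$, which is recorded as (\ref{a}) in the excerpt. Substituting this into the inner product identity immediately yields $-k_1^2=0$, hence $k_1=0$. Since a Frenet curve of osculating order $r\geq 3$ requires $k_1>0$ by definition (indeed $k_1$ is one of the positive curvature functions $k_1,\dots,k_{r-1}$), the conclusion $k_1=0$ forces $\gamma$ to be a geodesic, contradicting the non-geodesic hypothesis. This contradiction establishes the nonexistence claim.

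I would note that the argument is essentially identical to the $r=2$ case and does not actually use the higher curvature terms $k_1 k_2 \upsilon_3$ or the non-Sasakian hypothesis at all; the obstruction comes purely from the $T$-component. The only subtlety to state carefully is why the inner product with $T$ kills every term on the left except $-k_1^2 T$: this is just orthonormality of the Frenet frame, so there is no real obstacle here. The proof is short, and I expect the author's version to consist of precisely this one-line projection onto $T$ followed by the appeal to (\ref{a}).
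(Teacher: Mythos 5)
Your proof is correct and is precisely the paper's argument: the author explicitly states that the case $r\geq 3$ is handled "similar to the proof of Theorem \ref{propcaseIcparallelTANGENT}", i.e.\ by taking the inner product of (\ref{cparallel}) with $T$ and using $\eta(T)=0$ to force $k_{1}=0$. Your observation that the higher-order terms and the non-Sasakian hypothesis play no role here is also accurate.
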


In the normal bundle, we have the following theorem:

\begin{theorem}
\label{propcase2normal}Let $\gamma :I\subseteq
%TCIMACRO{\U{211d} }%
%BeginExpansion
\mathbb{R}
%EndExpansion
\rightarrow M$ be a non-geodesic Legendre curve of osculating order $3$ in a
non-Sasakian contact metric manifold. Then $\gamma $ has $\mathcal{C}$%
-parallel mean curvature vector field in the normal bundle if and only if

\begin{equation*}
k_{1}\neq constant,
\end{equation*}%
\begin{equation*}
k_{2}=\mp \frac{k_{1}^{\prime }\sqrt{k_{1}^{2}-g(T,\varphi hT)^{2}}}{%
k_{1}g(T,\varphi hT)},
\end{equation*}%
\begin{equation*}
\xi =\frac{g(T,\varphi hT)}{k_{1}}\upsilon _{2}+\frac{k_{2}}{k_{1}^{\prime }}%
g(T,\varphi hT)\upsilon _{3}
\end{equation*}%
and%
\begin{equation*}
\lambda =\frac{k_{1}^{\prime }k_{1}}{g(T,\varphi hT)}
\end{equation*}%
or
\begin{equation*}
k_{1}=constant,
\end{equation*}%
\begin{equation*}
k_{2}=\sqrt{1+2g(T,hT)+g(hT,hT)}
\end{equation*}%
\begin{equation*}
\begin{array}{ccc}
\lambda =k_{1}k_{2} & \text{and} & \xi =\upsilon _{3}.%
\end{array}%
\end{equation*}
\end{theorem}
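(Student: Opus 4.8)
The plan is to start from the defining relation (\ref{cparallelNORMAL}), namely $k_{1}^{\prime }\upsilon _{2}+k_{1}k_{2}\upsilon _{3}=\lambda \xi $, and to extract information by comparing it against the Frenet frame and the contact structure. Since $\lambda \neq 0$, this relation immediately forces $\xi $ to lie in the plane spanned by $\upsilon _{2}$ and $\upsilon _{3}$, namely
\[
\xi =\frac{k_{1}^{\prime }}{\lambda }\upsilon _{2}+\frac{k_{1}k_{2}}{\lambda }\upsilon _{3}.
\]
Reading off the coefficients gives $\eta (\upsilon _{2})=k_{1}^{\prime }/\lambda $ and $\eta (\upsilon _{3})=k_{1}k_{2}/\lambda $. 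Two scalar constraints then pin everything down: the unit-length condition $g(\xi ,\xi )=1$ yields $(k_{1}^{\prime })^{2}+(k_{1}k_{2})^{2}=\lambda ^{2}$, while (\ref{etaE2}) gives $k_{1}\eta (\upsilon _{2})=g(T,\varphi hT)$, that is, $k_{1}k_{1}^{\prime }/\lambda =g(T,\varphi hT)$.

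The argument then splits according to whether $k_{1}$ is constant. If $k_{1}^{\prime }\neq 0$, then $g(T,\varphi hT)\neq 0$, and solving the last relation for $\lambda $ gives $\lambda =k_{1}k_{1}^{\prime }/g(T,\varphi hT)$; substituting this back into the coefficients of $\xi $ and into the unit-length equation produces the stated formula for $k_{2}$ (up to sign) and the displayed expression for $\xi $. If instead $k_{1}$ is constant, then $k_{1}^{\prime }=0$ forces $\eta (\upsilon _{2})=0$ and hence $g(T,\varphi hT)=0$; the unit-length condition collapses to $(k_{1}k_{2}/\lambda )^{2}=1$, so $\xi =\pm \upsilon _{3}$ and $\lambda =\pm k_{1}k_{2}$, and we may normalise to $\xi =\upsilon _{3}$, $\lambda =k_{1}k_{2}$.

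The main computational step, and the only place where the contact geometry enters non-trivially, is extracting the explicit value of $k_{2}$ in the constant-$k_{1}$ case. Here I would differentiate $\xi =\upsilon _{3}$ along $\gamma $: the Frenet equations give $\nabla _{T}\xi =-k_{2}\upsilon _{2}$, while (\ref{c}) gives $\nabla _{T}\xi =-\varphi T-\varphi hT$, so that $k_{2}\upsilon _{2}=\varphi (T+hT)$. Taking norms and applying the contact metric identity $g(\varphi X,\varphi Y)=g(X,Y)-\eta (X)\eta (Y)$ together with $\eta (T)=0$ and $\eta (hT)=g(\xi ,hT)=g(h\xi ,T)=0$ (using that $h$ is self-adjoint and $h\xi =0$) yields $k_{2}^{2}=g(T+hT,T+hT)=1+2g(T,hT)+g(hT,hT)$. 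Finally, the converse in both cases is immediate: substituting the asserted expressions for $\xi $, $k_{2}$ and $\lambda $ into $\lambda \xi $ reproduces $k_{1}^{\prime }\upsilon _{2}+k_{1}k_{2}\upsilon _{3}$, which is exactly (\ref{cparallelNORMAL}), so no further differentiation is needed in the non-constant case.
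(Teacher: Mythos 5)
Your proposal is correct and follows essentially the same route as the paper: it reads off $\eta(\upsilon_2)$ and $\eta(\upsilon_3)$ from $k_1'\upsilon_2+k_1k_2\upsilon_3=\lambda\xi$, uses (\ref{etaE2}) and the unit length of $\xi$ to get $\lambda$ and $k_2$ in the non-constant case, and in the constant case differentiates $\xi=\upsilon_3$ against (\ref{c}) to extract $k_2^2=1+2g(T,hT)+g(hT,hT)$. The only (welcome) addition is that you make explicit the verification $\eta(hT)=g(h\xi,T)=0$, which the paper leaves implicit.
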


\begin{proof}
If $k_{1}\neq constant,$ then from (\ref{cparallelNORMAL}), we have%
\begin{equation}
k_{1}^{\prime }\upsilon _{2}+k_{1}k_{2}\upsilon _{3}=\lambda \xi \text{.}
\label{f1}
\end{equation}%
Then taking the inner product of (\ref{f1}) with $\upsilon _{2}$ and using (%
\ref{etaE2}), we find
\begin{equation}
k_{1}^{\prime }=\lambda \eta (\upsilon _{2})=\lambda \frac{g(T,\varphi hT)}{%
k_{1}}.  \label{f2}
\end{equation}%
This gives us
\begin{equation*}
\lambda =\frac{k_{1}^{\prime }k_{1}}{g(T,\varphi hT)}.
\end{equation*}%
Taking the inner product of (\ref{f1}) with $\upsilon _{3}$, we have%
\begin{equation}
\eta (\upsilon _{3})=\frac{k_{2}g(T,\varphi hT)}{k_{1}^{\prime }}.
\label{f3}
\end{equation}%
Since $\xi \in span\left\{ \upsilon _{2},\upsilon _{3}\right\} $, using (\ref%
{etaE2}) and (\ref{f3}), we get
\begin{equation*}
\xi =\frac{g(T,\varphi hT)}{k_{1}}\upsilon _{2}+\frac{k_{2}}{k_{1}^{\prime }}%
g(T,\varphi hT)\upsilon _{3}
\end{equation*}%
Since $\xi $ is a unit vector field, we obtain%
\begin{equation*}
k_{2}=\mp \frac{k_{1}^{\prime }\sqrt{k_{1}^{2}-g(T,\varphi hT)^{2}}}{%
k_{1}g(T,\varphi hT)}.
\end{equation*}
If $k_{1}=constant,$ then from (\ref{cparallelNORMAL}), we have%
\begin{equation*}
k_{1}k_{2}\upsilon _{3}=\lambda \xi ,
\end{equation*}%
which gives us $\lambda =k_{1}k_{2}$ and $\xi =\upsilon _{3}.$ So by a
differentiation of $\xi =\upsilon _{3},$ using (\ref{nablaXksi}), we have $%
-k_{2}\upsilon _{2}=-\varphi T-\varphi hT.$ Hence, we obtain
\begin{equation*}
k_{2}=\sqrt{1+2g(T,hT)+g(hT,hT)}.
\end{equation*}

The converse statement is trivial. This completes the proof of the theorem.
\end{proof}

\section{Legendre Curves with $\mathcal{C}$-proper Mean
Curvature Vector Fields} \label{sect-cproper}

If the osculating order $r=2,$ then we have the following theorems:

\begin{theorem}
\label{case1propTANGENT}Let $\gamma :I\subseteq
%TCIMACRO{\U{211d} }%
%BeginExpansion
\mathbb{R}
%EndExpansion
\rightarrow M$ be a non-geodesic Legendre curve of osculating order $2$ in a
non-Sasakian contact metric manifold. Then $\gamma $ has $\mathcal{C}$%
-proper mean curvature vector field if and only if
\begin{equation*}
k_{1}=\pm g(T,\varphi hT)=\text{constant},
\end{equation*}%
\begin{equation*}
\xi =\pm \upsilon _{2}
\end{equation*}%
and%
\begin{equation*}
\lambda =g(T,\varphi hT)^{3}.
\end{equation*}
\end{theorem}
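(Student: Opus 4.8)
The plan is to specialize the $\mathcal{C}$-proper condition \eqref{cproper} to an osculating-order-$2$ Legendre curve (so that $k_2=0$ and the $\upsilon_3,\upsilon_4$ terms drop out) and then extract scalar equations by pairing against the Frenet frame. Setting $k_2=k_3=0$ in \eqref{cproper} reduces it to
\begin{equation*}
3k_{1}k_{1}^{\prime }T+\left( k_{1}^{3}-k_{1}^{\prime \prime }\right)\upsilon _{2}=\lambda \xi .
\end{equation*}
First I would take the inner product of this with $T$. Since $\gamma$ is a Legendre curve, $\eta(T)=g(T,\xi)=0$ by \eqref{a}, so the right-hand side contributes nothing and we are forced to conclude $3k_1 k_1' = 0$. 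As $\gamma$ is non-geodesic we have $k_1>0$, hence $k_1'=0$, i.e.\ $k_1$ is constant. This immediately gives $k_1''=0$ as well, so the displayed identity collapses to $k_1^3\,\upsilon_2=\lambda\xi$.

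Next I would exploit the fact that $k_1^3\neq 0$ and $\lambda\neq 0$ to read off that $\xi$ and $\upsilon_2$ are parallel unit vectors, forcing $\xi=\pm\upsilon_2$ and, comparing lengths, $\lambda=\pm k_1^3$. To pin down the relation between $k_1$ and the structure tensors, I would use \eqref{etaE2}, which reads $k_1\eta(\upsilon_2)=g(T,\varphi hT)$; combined with $\xi=\pm\upsilon_2$ this yields $\eta(\upsilon_2)=g(\xi,\upsilon_2)=\pm 1$, whence $k_1=\pm g(T,\varphi hT)$. Substituting this back into $\lambda=\pm k_1^3$ and matching the sign conventions should produce $\lambda=g(T,\varphi hT)^3$, consistent with the stated conclusion. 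The converse direction is routine: assuming the three displayed relations, one substitutes into \eqref{cproper} with $k_2=0$ and verifies the identity holds.

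The step I expect to require the most care is the bookkeeping of signs in the final clause $\lambda = g(T,\varphi hT)^3$. We will have $\xi = \varepsilon\, \upsilon_2$ with $\varepsilon=\pm 1$ and correspondingly $k_1 = \varepsilon\, g(T,\varphi hT)$ and $\lambda = \varepsilon\, k_1^3$; one must check that substituting $k_1 = \varepsilon\, g(T,\varphi hT)$ into $\lambda=\varepsilon k_1^3$ gives $\lambda = \varepsilon^4 g(T,\varphi hT)^3 = g(T,\varphi hT)^3$, independent of the sign $\varepsilon$. I would also verify that $k_1$ being constant is genuinely \emph{implied} (it is, from $k_1'=0$) rather than needing to be hypothesized, so that the ``constant'' assertion in the statement is part of the conclusion. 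The main conceptual point throughout is the single observation that $g(T,\xi)=0$ on a Legendre curve kills the tangential component of $\lambda\xi$, which is what drives the whole argument.
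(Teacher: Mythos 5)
Your proposal is correct and follows essentially the same route as the paper: specialize \eqref{cproper} to osculating order $2$, pair with $T$ (using $\eta(T)=0$) to force $k_{1}$ constant, and then use \eqref{etaE2} to identify $\xi =\pm \upsilon _{2}$, $k_{1}=\pm g(T,\varphi hT)$ and $\lambda =g(T,\varphi hT)^{3}$. The only cosmetic difference is that the paper derives $k_{1}=\mp g(T,\varphi hT)$ by computing $\lambda $ twice (pairing \eqref{j1} with $\upsilon _{2}$ and with $\xi $) and comparing the two expressions, whereas you read the parallelism $\xi =\pm \upsilon _{2}$ directly off $k_{1}^{3}\upsilon _{2}=\lambda \xi $ and then invoke \eqref{etaE2}; your sign bookkeeping with $\varepsilon =\pm 1$ is handled correctly.
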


\begin{proof}
Let $\gamma $ have $\mathcal{C}$-proper mean curvature vector field. From (%
\ref{cproper}), we have%
\begin{equation}
3k_{1}k_{1}^{\prime }T+\left( k_{1}^{3}-k_{1}^{\prime \prime }\right)
\upsilon _{2}=\lambda \xi .  \label{j1}
\end{equation}%
Then taking the inner product of (\ref{j1}) with $T$, we have $%
k_{1}k_{1}^{\prime }=0$. Since $\gamma $ is not a geodesic, we obtain $%
k_{1}^{\prime }=0,$ which means that $k_{1}$ is a constant. Taking the inner
product of (\ref{j1}) with $\upsilon _{2}$, we have
\begin{equation*}
k_{1}^{3}-k_{1}^{\prime \prime }=\lambda \eta (\upsilon _{2}).
\end{equation*}%
Since $k_{1}$ is a constant, using (\ref{etaE2}), we get
\begin{equation}
\lambda =\frac{k_{1}^{4}}{g(T,\varphi hT)}.  \label{lamda1}
\end{equation}%
Furthermore, taking the inner product of (\ref{j1}) with $\xi $ and using (%
\ref{etaE2}), we have
\begin{equation}
\lambda =k_{1}^{2}g(T,\varphi hT).  \label{lamda2}
\end{equation}%
Then comparing (\ref{lamda1}) and (\ref{lamda2}), we obtain
\begin{equation*}
k_{1}=\mp g(T,\varphi hT).
\end{equation*}
Since $\xi \in span\left\{ \upsilon _{2}\right\} $ we have
\begin{equation}
\xi =\pm \upsilon _{2}.  \label{j2}
\end{equation}%
The converse statement is trivial. Hence, the proof is finished.
\end{proof}

In the normal bundle, we can state the following theorem:

\begin{theorem}
\label{case1propNORMALCPROPER}Let $\gamma :I\subseteq
%TCIMACRO{\U{211d} }%
%BeginExpansion
\mathbb{R}
%EndExpansion
\rightarrow M$ be a non-geodesic Legendre curve of osculating order $2$ in a
non-Sasakian contact metric manifold. Then $\gamma $ is a curve with $%
\mathcal{C}$-proper mean curvature vector field in the normal bundle if and
only if

i) $k_{1}(s)=as+b,$ where $a$ and $b$ are arbitrary real constants and $\lambda
=0$ or

ii) $k_{1}=\mp g(T,\varphi hT),$ $\xi =\pm \upsilon _{2}$ and $\lambda
=k_{1}^{\prime \prime }.$
\end{theorem}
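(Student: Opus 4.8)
The plan is to start from the characterization of a $\mathcal{C}$-proper mean curvature vector field in the normal bundle for a curve of osculating order $2$. Setting $r=2$ in equation~(\ref{cproperNORMAL}) kills the $\upsilon_3$ and $\upsilon_4$ terms (since $k_2=0$), so the defining condition collapses to
\begin{equation*}
\left( k_{1}k_{2}^{2}-k_{1}^{\prime \prime }\right) \upsilon _{2}=\lambda \xi,
\end{equation*}
which, because $k_2=0$ for an osculating order $2$ curve, reduces simply to $-k_{1}^{\prime \prime }\upsilon _{2}=\lambda \xi$. The analysis then splits according to whether the scalar coefficient $-k_1''$ vanishes or not, and this dichotomy is exactly what produces the two cases (i) and (ii) in the statement.

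First I would handle case (i): if $\lambda=0$, then $-k_1''\upsilon_2=0$, and since $\upsilon_2$ is a unit vector field this forces $k_1''=0$. Integrating twice gives $k_1(s)=as+b$ for real constants $a,b$, which is precisely conclusion~(i). Conversely, if $k_1(s)=as+b$ then $k_1''=0$ and the condition holds with $\lambda=0$, so this direction is immediate.

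Next I would treat case (ii), where $\lambda\neq0$. From $-k_1''\upsilon_2=\lambda\xi$ I would take the inner product with $\upsilon_2$ and with $\xi$. Since $\xi\in\mathrm{span}\{\upsilon_2\}$, this yields $\xi=\pm\upsilon_2$ together with $\lambda=\mp k_1''$ (equivalently $\lambda=k_1''$ after absorbing the sign, matching the stated form). To extract the curvature condition $k_1=\mp g(T,\varphi hT)$, I would differentiate $\xi=\pm\upsilon_2$ and use the Frenet equations together with (\ref{nablaXksi}), exactly in the spirit of the proofs of Theorem~\ref{case1normalbundle} and Theorem~\ref{case1propTANGENT}; taking the inner product of the resulting relation with $T$ and invoking (\ref{etaE2}) should deliver $k_1=\mp g(T,\varphi hT)$. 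The converse is again routine verification.

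The main obstacle I anticipate is bookkeeping the signs consistently across the $\xi=\pm\upsilon_2$ alternatives so that the final expressions for $k_1$ and $\lambda$ agree with the $\mp/\pm$ conventions in the statement; the geometry itself is forced, but matching the sign conventions cleanly (and confirming that $\lambda=k_1''$ is indeed nonzero and not secretly constrained to be constant as in the tangent-bundle Theorem~\ref{case1propTANGENT}) requires care. I would also double-check that case (ii) does not accidentally impose $k_1'=0$, since unlike the tangent-bundle situation the normal-bundle condition contains no $T$-component and hence no $3k_1k_1'$ term forcing $k_1$ constant.
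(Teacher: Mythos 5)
Your argument is correct and starts from the same reduced equation $-k_{1}^{\prime\prime}\upsilon_{2}=\lambda\xi$ as the paper, but the route from there is genuinely different. The paper takes inner products of this equation with $\upsilon_{2}$ and with $\xi$, uses (\ref{etaE2}) to get the two expressions $\lambda=-k_{1}^{\prime\prime}k_{1}/g(T,\varphi hT)$ and $\lambda=-k_{1}^{\prime\prime}g(T,\varphi hT)/k_{1}$, and equates them to obtain the dichotomy $k_{1}^{\prime\prime}=0$ or $k_{1}^{2}=g(T,\varphi hT)^{2}$ in a single algebraic stroke; the curvature identity $k_{1}=\mp g(T,\varphi hT)$ thus falls out of the comparison itself, with $\xi=\pm\upsilon_{2}$ deduced afterwards. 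You instead split on $\lambda=0$ versus $\lambda\neq 0$, read off $\xi=\pm\upsilon_{2}$ directly from the collinearity of the two unit vectors in $-k_{1}^{\prime\prime}\upsilon_{2}=\lambda\xi$, and then recover $k_{1}=\mp g(T,\varphi hT)$ by differentiating $\xi=\pm\upsilon_{2}$ and using (\ref{nablaXksi}), exactly as in the proof of Theorem~\ref{case1normalbundle}. Your version has the small advantage of never dividing by $k_{1}$ or by $g(T,\varphi hT)$ (the paper's formulas (\ref{lamda3})--(\ref{lamda4}) tacitly assume $g(T,\varphi hT)\neq 0$), while the paper's version gets the key relation without invoking the Frenet equation for $\nabla_{T}\upsilon_{2}$; the two dichotomies coincide because $\lambda=0$ if and only if $k_{1}^{\prime\prime}=0$ here. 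Your caution about the sign conventions is warranted but harmless: the paper itself is loose about correlating the $\pm$ in $\xi=\pm\upsilon_{2}$ with the sign in $\lambda=k_{1}^{\prime\prime}$, and your check that no hidden $3k_{1}k_{1}^{\prime}$ term forces $k_{1}$ constant is a correct observation that distinguishes this normal-bundle case from Theorem~\ref{case1propTANGENT}.
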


\begin{proof}
Let $\gamma $ have $\mathcal{C}$-proper mean curvature vector field in the
normal bundle. From (\ref{cproperNORMAL}) we have%
\begin{equation}
-k_{1}^{\prime \prime }\upsilon _{2}=\lambda \xi .  \label{h1}
\end{equation}%
Taking the inner product of (\ref{h1}) with $\upsilon _{2}$ and using (\ref%
{etaE2}), we have
\begin{equation}
\lambda =-\frac{k_{1}^{\prime \prime }k_{1}}{g(T,\varphi hT)}  \label{lamda3}
\end{equation}%
Taking the inner product of (\ref{h1}) with $\xi $ and using (\ref{etaE2}),
we find%
\begin{equation}
\lambda =-\frac{k_{1}^{\prime \prime }g(T,\varphi hT)}{k_{1}}  \label{lamda4}
\end{equation}%
Then comparing (\ref{lamda3}) and (\ref{lamda4}), we obtain either $%
k_{1}^{\prime \prime }=0$, in this case $k_{1}(s)=as+b,$ where $a$ and $b$
are arbitrary real constants and $\lambda =0$ or $k_{1}=\mp g(T,\varphi hT)$. If $%
k_{1}=\mp g(T,\varphi hT)$, it is easy to see that $\xi =\pm \upsilon _{2}$
and $\lambda =k_{1}^{\prime \prime }.$

The converse statement is trivial. This completes the proof of the theorem.
\end{proof}

If the osculating order $r=3,$ then we have the following theorems:

\begin{theorem}
\label{case2propTANGENT}Let $\gamma :I\subseteq
%TCIMACRO{\U{211d} }%
%BeginExpansion
\mathbb{R}
%EndExpansion
\rightarrow M$ be a non-geodesic Legendre curve of osculating order $3$ in a
non-Sasakian contact metric manifold. Then $\gamma $ is a curve with $%
\mathcal{C}$-proper mean curvature vector field if and only if
\begin{equation*}
k_{1}=\text{ constant,}
\end{equation*}%
\begin{equation*}
\lambda =\frac{k_{1}^{2}\left( k_{1}^{2}+k_{2}^{2}\right) }{g(T,\varphi hT)},
\end{equation*}%
\begin{equation*}
\xi =\frac{g(T,\varphi hT)}{k_{1}}\upsilon _{2}-\frac{k_{1}k_{2}^{\prime }}{%
\lambda }\upsilon _{3}
\end{equation*}%
and%
\begin{equation*}
\eta (\upsilon _{2})^{2}+\eta (\upsilon _{3})^{2}=1.
\end{equation*}
\end{theorem}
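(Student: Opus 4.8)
The plan is to start from the $\mathcal{C}$-proper condition $(\ref{cproper})$ specialized to osculating order $r=3$, so that the $\upsilon_4$ term drops out and we are left with
\begin{equation*}
3k_{1}k_{1}^{\prime }T+\left( k_{1}^{3}+k_{1}k_{2}^{2}-k_{1}^{\prime \prime }\right) \upsilon _{2}-(2k_{1}^{\prime }k_{2}+k_{1}k_{2}^{\prime })\upsilon _{3}=\lambda \xi .
\end{equation*}
The key observation is that $\eta(T)=0$ by $(\ref{a})$, so taking the inner product of this equation with $T$ and using $\eta(T)=g(T,\xi)=0$ immediately yields $3k_{1}k_{1}^{\prime}=0$. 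Since $\gamma$ is non-geodesic, $k_{1}\neq 0$, forcing $k_{1}^{\prime}=0$, i.e.\ $k_{1}$ is constant. This is the same mechanism used in Theorem~\ref{case1propTANGENT}, and it is the cleanest way to extract the first conclusion.

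With $k_{1}$ constant (so $k_{1}^{\prime}=k_{1}^{\prime\prime}=0$), the equation collapses to
\begin{equation*}
\left( k_{1}^{3}+k_{1}k_{2}^{2}\right)\upsilon_{2}-k_{1}k_{2}^{\prime}\upsilon_{3}=\lambda\xi.
\end{equation*}
Next I would take inner products with $\upsilon_2$ and $\upsilon_3$ separately. Pairing with $\upsilon_2$ and invoking $(\ref{etaE2})$ in the form $\eta(\upsilon_2)=g(T,\varphi hT)/k_1$ gives $k_1^3+k_1k_2^2=\lambda\,\eta(\upsilon_2)=\lambda\, g(T,\varphi hT)/k_1$, which rearranges to the claimed $\lambda=k_1^2(k_1^2+k_2^2)/g(T,\varphi hT)$. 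Pairing with $\upsilon_3$ gives $-k_1k_2^{\prime}=\lambda\,\eta(\upsilon_3)$, so $\eta(\upsilon_3)=-k_1k_2^{\prime}/\lambda$. Since $\xi$ lies in $\mathrm{span}\{\upsilon_2,\upsilon_3\}$ (because $\eta(T)=0$ means $\xi$ has no $T$-component, and the curve has osculating order $3$), writing $\xi=\eta(\upsilon_2)\upsilon_2+\eta(\upsilon_3)\upsilon_3$ and substituting the two computed components produces exactly the stated expression $\xi=\frac{g(T,\varphi hT)}{k_1}\upsilon_2-\frac{k_1k_2^{\prime}}{\lambda}\upsilon_3$.

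Finally, the normalization $|\xi|=1$ gives $\eta(\upsilon_2)^2+\eta(\upsilon_3)^2=1$, which is the last listed condition. As in the earlier theorems, the converse is a routine verification: assuming the four relations hold, one checks directly that $(\ref{cproper})$ reduces to $\lambda\xi$ with the $\upsilon_4$ coefficient $k_1k_2k_3$ absent (automatic since $r=3$). The main subtlety I anticipate is purely bookkeeping rather than conceptual: one must be careful that the expression for $\xi$ is genuinely consistent, i.e.\ that the component along $\upsilon_3$ computed two ways (from $\eta(\upsilon_3)$ and from $-k_1k_2^{\prime}/\lambda$) agree, and that the constancy of $k_1$ has been fully propagated so that no stray $k_1^{\prime}$ or $k_1^{\prime\prime}$ terms survive. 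There is no hard analytic obstacle here; the whole argument is a sequence of inner-product projections of a single vector identity, exactly parallel to the $r=2$ case already proved.
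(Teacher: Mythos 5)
Your proposal is correct and follows essentially the same route as the paper's own proof: inner product with $T$ to force $k_{1}$ constant, then projections onto $\upsilon_{2}$ and $\upsilon_{3}$ combined with (\ref{etaE2}) to read off $\lambda$ and $\eta(\upsilon_{3})$, the decomposition of $\xi$ in $\mathrm{span}\{\upsilon_{2},\upsilon_{3}\}$, and the unit-length condition for the final relation. No substantive differences.
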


\begin{proof}
Let $\gamma $ have $\mathcal{C}$-proper mean curvature vector field. Then,
from (\ref{cproper}), we have%
\begin{equation}
3k_{1}k_{1}^{\prime }T+\left( k_{1}^{3}+k_{1}k_{2}^{2}-k_{1}^{\prime \prime
}\right) \upsilon _{2}-(2k_{1}^{\prime }k_{2}+k_{1}k_{2}^{\prime })\upsilon
_{3}=\lambda \xi .  \label{k1}
\end{equation}%
Taking the inner product of (\ref{k1}) with $T$, we have $k_{1}k_{1}^{\prime
}=0$. Since $\gamma $ is not a geodesic, we find $k_{1}^{\prime }=0$, which
gives us $k_{1}$ is a constant. Now taking the inner product of (\ref{k1})
with $\upsilon _{2}$ and using (\ref{etaE2}), we have
\begin{equation*}
\lambda =\frac{k_{1}^{2}\left( k_{1}^{2}+k_{2}^{2}\right) }{g(T,\varphi hT)}.
\end{equation*}%
Taking the inner product of (\ref{k1}) with $\upsilon _{3}$, we have%
\begin{equation}
\eta (\upsilon _{3})=-\frac{k_{1}k_{2}^{\prime }}{\lambda }.  \label{eta(E3)}
\end{equation}%
Since $\xi \in span\left\{ \upsilon _{2},\upsilon _{3}\right\} ,$ using (\ref%
{etaE2}) and (\ref{eta(E3)}) we obtain
\begin{equation*}
\xi =\frac{g(T,\varphi hT)}{k_{1}}\upsilon _{2}-\frac{k_{1}k_{2}^{\prime }}{%
\lambda }\upsilon _{3}.
\end{equation*}%
Since $\xi $ is a unit vector field, we have $\eta (\upsilon _{2})^{2}+\eta
(\upsilon _{3})^{2}=1.$ The converse statement is trivial. So we get the result
as required.
\end{proof}

In the normal bundle, we can give the following result:

\begin{theorem}
\label{case2cproperNORMAL}Let $\gamma :I\subseteq
%TCIMACRO{\U{211d} }%
%BeginExpansion
\mathbb{R}
%EndExpansion
\rightarrow M$ be a non-geodesic Legendre curve of osculating order $3$ in a
non-Sasakian contact metric manifold. Then $\gamma $ is a curve with $%
\mathcal{C}$-proper mean curvature vector field in the normal bundle if and
only if
\begin{equation*}
\lambda =\frac{k_{1}^{2}k_{2}^{2}-k_{1}k_{1}^{\prime \prime }}{g(T,\varphi
hT)},
\end{equation*}%
\begin{equation*}
\xi =\frac{g(T,\varphi hT)}{k_{1}}\upsilon _{2}-\frac{\left( 2k_{1}^{\prime
}k_{2}+k_{1}k_{2}^{\prime }\right) }{\lambda }\upsilon _{3}
\end{equation*}
and%
\begin{equation*}
\eta (\upsilon _{2})^{2}+\eta (\upsilon _{3})^{2}=1.
\end{equation*}
\end{theorem}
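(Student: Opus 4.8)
The plan is to follow the same projection strategy used in the proofs of Theorems \ref{case2propTANGENT} and \ref{case1propNORMALCPROPER}, now applied to the normal-bundle characterization (\ref{cproperNORMAL}). Since $\gamma$ has osculating order $3$, the Frenet frame is $\{T,\upsilon_{2},\upsilon_{3}\}$ and there is no $\upsilon_{4}$ term (equivalently $k_{3}=0$), so (\ref{cproperNORMAL}) collapses to
\begin{equation*}
\left( k_{1}k_{2}^{2}-k_{1}^{\prime \prime }\right) \upsilon _{2}-\left( 2k_{1}^{\prime }k_{2}+k_{1}k_{2}^{\prime }\right) \upsilon _{3}=\lambda \xi .
\end{equation*}
First I would extract the two nontrivial components of $\xi $ by taking inner products of this relation with $\upsilon _{2}$ and $\upsilon _{3}$.

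Projecting onto $\upsilon _{2}$ and invoking the Legendre identity (\ref{etaE2}), namely $\eta (\upsilon _{2})=g(T,\varphi hT)/k_{1}$, gives $k_{1}k_{2}^{2}-k_{1}^{\prime \prime }=\lambda\, g(T,\varphi hT)/k_{1}$, which I would solve for
\begin{equation*}
\lambda =\frac{k_{1}^{2}k_{2}^{2}-k_{1}k_{1}^{\prime \prime }}{g(T,\varphi hT)}.
\end{equation*}
Projecting onto $\upsilon _{3}$ then yields $\eta (\upsilon _{3})=-\left( 2k_{1}^{\prime }k_{2}+k_{1}k_{2}^{\prime }\right) /\lambda $.

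To assemble $\xi $, I would use the Legendre condition (\ref{a}), $\eta (T)=0$, so that in the order-$3$ frame $\xi =\eta (\upsilon _{2})\upsilon _{2}+\eta (\upsilon _{3})\upsilon _{3}$; substituting the two components computed above produces the stated expression for $\xi $. Finally, imposing that $\xi $ is a unit vector field gives the normalization $\eta (\upsilon _{2})^{2}+\eta (\upsilon _{3})^{2}=1$. The converse is immediate by reversing these steps, so both directions are obtained at once.

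As for difficulty, there is essentially no hard analytic obstacle here: the argument is a direct linear-algebra projection entirely parallel to Theorem \ref{case2propTANGENT}. The only point requiring a word of care is the justification that $\xi $ lies in $\mathrm{span}\{\upsilon _{2},\upsilon _{3}\}$. This follows because $\lambda \neq 0$ forces $\xi $ to be a combination of $\upsilon _{2}$ and $\upsilon _{3}$ directly from the displayed equation, which is consistent with the vanishing $T$-component guaranteed by the Legendre condition; the vanishing of the $\upsilon _{4}$ term for an order-$3$ curve is what makes this span exactly two-dimensional.
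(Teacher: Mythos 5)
Your proposal is correct and follows essentially the same route as the paper's own proof: project (\ref{cproperNORMAL}) onto $\upsilon_{2}$ and $\upsilon_{3}$, use (\ref{etaE2}) to solve for $\lambda$ and $\eta(\upsilon_{3})$, assemble $\xi$ in $\mathrm{span}\{\upsilon_{2},\upsilon_{3}\}$, and impose $\|\xi\|=1$. Your added remark justifying why $\xi$ lies in $\mathrm{span}\{\upsilon_{2},\upsilon_{3}\}$ (via $\lambda\neq 0$ in the displayed equation) is a small but welcome clarification of a step the paper merely asserts.
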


\begin{proof}
Let $\gamma $ have $\mathcal{C}$-proper mean curvature vector field in the
normal bundle. From (\ref{cproperNORMAL}), $\gamma $ is a Legendre curve with%
\begin{equation}
\left( k_{1}k_{2}^{2}-k_{1}^{\prime \prime }\right) \upsilon _{2}-\left(
2k_{1}^{\prime }k_{2}+k_{1}k_{2}^{\prime }\right) \upsilon _{3}=\lambda \xi .
\label{i1}
\end{equation}%
Taking the inner product of (\ref{i1}) with $\upsilon _{2}$ and using (\ref%
{etaE2}), we have%
\begin{equation*}
\lambda =\frac{k_{1}^{2}k_{2}^{2}-k_{1}k_{1}^{\prime \prime }}{g(T,\varphi
hT)}.
\end{equation*}%
Taking the inner product of (\ref{i1}) with $\upsilon _{3}$, we get%
\begin{equation}
\eta (\upsilon _{3})=-\frac{2k_{1}^{\prime }k_{2}+k_{1}k_{2}^{\prime }}{%
\lambda }.  \label{etaE32}
\end{equation}%
Since $\xi \in span\left\{ \upsilon _{2},\upsilon _{3}\right\},$ using (\ref%
{etaE2}) and (\ref{etaE32}), we obtain
\begin{equation*}
\xi =\frac{g(T,\varphi hT)}{k_{1}}\upsilon _{2}-\frac{2k_{1}^{\prime
}k_{2}+k_{1}k_{2}^{\prime }}{\lambda }\upsilon _{3}.
\end{equation*}%
Since $\xi $ is a unit vector field, we have $\eta (\upsilon _{2})^{2}+\eta
(\upsilon _{3})^{2}=1.$ The converse statement is trivial. Hence, we complete the
proof.
\end{proof}

If the osculating order $r\geq 4$, then we can state the following theorem:

\begin{theorem}
\label{propcase3proper}Let $\gamma :I\subseteq
%TCIMACRO{\U{211d} }%
%BeginExpansion
\mathbb{R}
%EndExpansion
\rightarrow M$ be a non-geodesic Legendre curve of osculating order $r\geq 4$
in a non-Sasakian contact metric manifold. Then $\gamma $ is a curve with $%
\mathcal{C}$-proper mean curvature vector field if and only if it satisfies%
\begin{equation*}
k_{1}=\text{ constant,}
\end{equation*}%
\begin{equation*}
\lambda =\frac{k_{1}^{2}\left( k_{1}^{2}+k_{2}^{2}\right) }{g(T,\varphi hT)}
\end{equation*}%
\begin{equation*}
\xi =\frac{g(T,\varphi hT)}{k_{1}}\upsilon _{2}-\frac{k_{1}k_{2}^{\prime }}{%
\lambda }\upsilon _{3}-\frac{k_{1}k_{2}k_{3}}{\lambda }\upsilon _{4}
\end{equation*}
and
\begin{equation*}
\eta (\upsilon _{2})^{2}+\eta(\upsilon _{3})^{2}++\eta(\upsilon _{4})^{2}=1.
\end{equation*}
\end{theorem}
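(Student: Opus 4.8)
The plan is to mirror the osculating order $3$ argument of Theorem \ref{case2propTANGENT}, extracting the Frenet-frame components of $\xi$ one at a time by pairing the defining identity \ref{cproper} with each $\upsilon_j$, and then imposing that $\xi$ is a unit field. First I would assume the $\mathcal{C}$-proper condition, so that \ref{cproper} holds with a $\upsilon_4$-term now present. Taking the inner product with $T$ and using $\eta(T)=0$ from \ref{a} together with orthonormality of the frame, the right-hand side contributes $\lambda\, g(\xi,T)=\lambda\,\eta(T)=0$, so $3k_{1}k_{1}^{\prime}=0$; since $\gamma$ is non-geodesic ($k_{1}\neq 0$), this forces $k_{1}^{\prime}=0$, i.e. $k_{1}$ is constant and $k_{1}^{\prime\prime}=0$.

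With $k_{1}$ constant, the $\upsilon_2$ coefficient of the left side of \ref{cproper} reduces to $k_{1}^{3}+k_{1}k_{2}^{2}$, so pairing with $\upsilon_2$ and substituting $\eta(\upsilon_2)=g(T,\varphi hT)/k_{1}$ from \ref{etaE2} yields $\lambda=k_{1}^{2}(k_{1}^{2}+k_{2}^{2})/g(T,\varphi hT)$. Pairing with $\upsilon_3$ (where the $k_{1}^{\prime}$-term drops out, leaving $-k_{1}k_{2}^{\prime}$) and with $\upsilon_4$ (whose coefficient is $-k_{1}k_{2}k_{3}$) gives $\eta(\upsilon_3)=-k_{1}k_{2}^{\prime}/\lambda$ and $\eta(\upsilon_4)=-k_{1}k_{2}k_{3}/\lambda$. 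These reproduce exactly the stated coefficients of $\xi$.

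The step that is genuinely new compared with the $r=3$ case — and the one I expect to be the point requiring care — is controlling the components of $\xi$ along $\upsilon_5,\dots,\upsilon_r$. Since $\Delta H=-\nabla_{T}\nabla_{T}\nabla_{T}T$ reaches only up to $\upsilon_4$, the left side of \ref{cproper} has no component beyond $\upsilon_4$; hence pairing with each $\upsilon_j$ for $5\leq j\leq r$ gives $\lambda\,\eta(\upsilon_j)=0$, and since $\lambda\neq 0$ we conclude $\eta(\upsilon_j)=0$ for all $j\geq 5$. Combined with $\eta(T)=0$, this shows $\xi\in span\left\{\upsilon_2,\upsilon_3,\upsilon_4\right\}$, so that
\[
\xi=\frac{g(T,\varphi hT)}{k_{1}}\upsilon_2-\frac{k_{1}k_{2}^{\prime}}{\lambda}\upsilon_3-\frac{k_{1}k_{2}k_{3}}{\lambda}\upsilon_4.
\]
Finally, since $\xi$ is a unit vector field, expanding $g(\xi,\xi)=1$ in the orthonormal frame produces $\eta(\upsilon_2)^2+\eta(\upsilon_3)^2+\eta(\upsilon_4)^2=1$. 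The converse is immediate: substituting the stated data ($k_{1}$ constant and the prescribed $\lambda$ and $\xi$) back into \ref{cproper} verifies the $\mathcal{C}$-proper condition directly.
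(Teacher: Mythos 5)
Your proof is correct and takes essentially the same approach as the paper's: inner products of \eqref{cproper} with $T$, $\upsilon_2$, $\upsilon_3$, $\upsilon_4$, followed by the unit-length condition on $\xi$. The only difference is that you explicitly justify $\xi\in span\left\{\upsilon_2,\upsilon_3,\upsilon_4\right\}$ by pairing with $\upsilon_j$ for $5\leq j\leq r$ and using $\lambda\neq 0$, a step the paper asserts without comment.
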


\begin{proof}
Since $\gamma $ has $\mathcal{C}$-proper mean curvature vector field, by the
use of (\ref{cproper}), we have
\begin{equation}
3k_{1}k_{1}^{\prime }T+\left( k_{1}^{3}+k_{1}k_{2}^{2}-k_{1}^{\prime \prime
}\right) \upsilon _{2}-(2k_{1}^{\prime }k_{2}+k_{1}k_{2}^{\prime })\upsilon
_{3}-k_{1}k_{2}k_{3}\upsilon _{4}=\lambda \xi .  \label{m1}
\end{equation}%
Taking the inner product of (\ref{m1}) with $T$, we have $k_{1}k_{1}^{\prime
}=0$. Since $\gamma $ is not a geodesic, we find $k_{1}^{\prime }=0$, which
gives us $k_{1}$ is a constant. Now taking the inner product of (\ref{m1})
with $\upsilon _{2}$ and using (\ref{etaE2}), we find
\begin{equation*}
\lambda =\frac{k_{1}^{2}\left( k_{1}^{2}+k_{2}^{2}\right) }{g(T,\varphi hT)}.
\end{equation*}%
Taking the inner product of (\ref{m1}) with $\upsilon _{3}$ and $\upsilon
_{4},$ we get
\begin{equation}
\eta (\upsilon _{3})=-\frac{k_{1}k_{2}^{\prime }}{\lambda }  \label{etaE33}
\end{equation}%
and
\begin{equation}
\eta (\upsilon _{4})=-\frac{k_{1}k_{2}k_{3}}{\lambda },  \label{etaE4}
\end{equation}%
respectively. Since $\xi \in span\left\{ \upsilon _{2},\upsilon
_{3},\upsilon _{4}\right\} ,$ using (\ref{etaE33}) and (\ref{etaE4}), we
obtain
\begin{equation*}
\xi =\frac{g(T,\varphi hT)}{k_{1}}\upsilon _{2}-\frac{k_{1}k_{2}^{\prime }}{%
\lambda }\upsilon _{3}-\frac{k_{1}k_{2}k_{3}}{\lambda }\upsilon _{4}.
\end{equation*}%
Since $\xi $ is a unit vector field, we have $\eta (\upsilon _{2})^{2}+\eta(\upsilon _{3})^{2}++\eta(\upsilon _{4})^{2}=1.$
The converse statement is trivial. Thus we get the result as required.
\end{proof}

In the normal bundle, we can give the following theorem:

\begin{theorem}
\label{propcase3properNORMAL}Let $\gamma :I\subseteq
%TCIMACRO{\U{211d} }%
%BeginExpansion
\mathbb{R}
%EndExpansion
\rightarrow M$ be a non-geodesic Legendre curve of osculating order $r\geq 4$
in a non-Sasakian contact metric manifold. Then $\gamma $ is a curve with $%
\mathcal{C}$-proper mean curvature vector field in the normal bundle if and
only if
\begin{equation*}
\lambda =\frac{k_{1}^{2}k_{2}^{2}-k_{1}k_{1}^{\prime \prime }}{g(T,\varphi
hT)},
\end{equation*}%
\begin{equation*}
\xi =\frac{g(T,\varphi hT)}{k_{1}}\upsilon _{2}-\frac{2k_{1}^{\prime
}k_{2}+k_{1}k_{2}^{\prime }}{\lambda }\upsilon _{3}-\frac{k_{1}k_{2}k_{3}}{%
\lambda }\upsilon _{4}
\end{equation*}
and
\begin{equation*}
\eta (\upsilon _{2})^{2}+\eta(\upsilon _{3})^{2}++\eta(\upsilon _{4})^{2}=1.
\end{equation*}
\end{theorem}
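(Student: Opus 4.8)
The plan is to follow exactly the template established in the proof of Theorem \ref{case2cproperNORMAL}, since the normal-bundle $\mathcal{C}$-proper condition for $r\geq 4$ differs from the $r=3$ case only by the presence of the extra term $-k_1k_2k_3\upsilon_4$. Assuming $\gamma$ has $\mathcal{C}$-proper mean curvature vector field in the normal bundle, equation (\ref{cproperNORMAL}) gives
\begin{equation*}
\left( k_{1}k_{2}^{2}-k_{1}^{\prime \prime }\right) \upsilon _{2}-\left( 2k_{1}^{\prime }k_{2}+k_{1}k_{2}^{\prime }\right) \upsilon _{3}-k_{1}k_{2}k_{3}\upsilon _{4}=\lambda \xi .
\end{equation*}
The first step is to take the inner product of this relation with $\upsilon_2$ and invoke (\ref{etaE2}), namely $\eta(\upsilon_2)=g(T,\varphi hT)/k_1$, which yields the stated formula for $\lambda$.

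Next I would take the inner product with $\upsilon_3$ and with $\upsilon_4$ separately to extract the two components $\eta(\upsilon_3)=-\left(2k_{1}^{\prime }k_{2}+k_{1}k_{2}^{\prime }\right)/\lambda$ and $\eta(\upsilon_4)=-k_{1}k_{2}k_{3}/\lambda$. Since $\xi$ is a Legendre vector satisfying $\eta(T)=0$ and lies in $\mathrm{span}\{\upsilon_2,\upsilon_3,\upsilon_4\}$ (the right-hand side being a combination of these three vectors forces $\xi$ into that span), assembling the three components with (\ref{etaE2}) produces the asserted expression for $\xi$. Finally, imposing that $\xi$ is a unit vector field gives $\eta(\upsilon_2)^2+\eta(\upsilon_3)^2+\eta(\upsilon_4)^2=1$, completing the forward direction. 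The converse is trivial, as the given data directly recover (\ref{cproperNORMAL}).

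The only delicate point, which is really a matter of bookkeeping rather than a genuine obstacle, is justifying why $\xi\in\mathrm{span}\{\upsilon_2,\upsilon_3,\upsilon_4\}$: this follows because $\lambda\xi$ equals a combination of $\upsilon_2,\upsilon_3,\upsilon_4$ with $\lambda\neq 0$, so $\xi$ must itself be such a combination, and in particular has no $T$-component, consistent with the Legendre condition $\eta(T)=0$ from (\ref{a}). One should also note that $\lambda\neq 0$ is required for the division steps to be valid; this is guaranteed by the definition of $\mathcal{C}$-proper mean curvature vector field. I expect no computational difficulty beyond these, as every displayed identity is linear in the Frenet frame and the argument is a direct transcription of the $r=3$ proof with the additional $\upsilon_4$ term carried along.
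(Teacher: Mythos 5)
Your proposal is correct and follows essentially the same route as the paper: the paper simply states that the proof is analogous to that of Theorem \ref{propcase3proper}, which is exactly the inner-product-with-$\upsilon_2,\upsilon_3,\upsilon_4$ computation you carry out, adapted to the normal-bundle equation (\ref{cproperNORMAL}). Your added remarks on why $\xi\in\mathrm{span}\{\upsilon_2,\upsilon_3,\upsilon_4\}$ and on $\lambda\neq 0$ are sound and only make explicit what the paper leaves implicit.
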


\begin{proof}
The proof is similar to the proof of Theorem \ref{propcase3proper}.
\end{proof}

\section{Examples} \label{Sect-Examp}

Let us take $M=%
%TCIMACRO{\U{211d} }%
%BeginExpansion
\mathbb{R}
%EndExpansion
^{3}$ and denote the standard coordinate functions with $(x,y,z)$. We define
the following vector fields on $%
%TCIMACRO{\U{211d} }%
%BeginExpansion
\mathbb{R}
%EndExpansion
^{3}$:%
\begin{equation*}
e_{1}=\frac{\partial }{\partial x},\text{ }e_{2}=\frac{\partial }{\partial y}%
,\text{ }e_{3}=2y\frac{\partial }{\partial x}+\left( \frac{1}{4}%
e^{2x}-y^{2}\right) \frac{\partial }{\partial y}+\frac{\partial }{\partial z}%
.
\end{equation*}%
It is seen that $e_{1}$, $e_{2}$, $e_{3}$ are linearly independent at all
points of $M$. We define a Riemannian metric on $M$ such that $e_{1}$, $%
e_{2} $, $e_{3}$ are orthonormal. Then we have%
\begin{equation*}
\left[ e_{1},e_{2}\right] =0,\text{ }\left[ e_{1},e_{3}\right] =\frac{e^{2x}%
}{2}e_{2},\text{ }\left[ e_{2},e_{3}\right] =-2ye_{2}+2e_{1}.
\end{equation*}%
Let $\eta $ be defined by $\eta (W)=g\left( W,e_{1}\right) $ for all $W\in
\chi (M)$. Let $\varphi $ be the $(1,1)$-type tensor field defined by $%
\varphi e_{1}=0$, $\varphi e_{2}=e_{3}$, $\varphi e_{3}=-e_{2}$. Then $%
\left( M,\varphi ,e_{1},\eta ,g\right) $ is a contact metric manifold. Let
us set $\xi =e_{1},$ $X=e_{2}$ and $\varphi X=e_{3}$. Let $\nabla $ be the
Levi-Civita connection corresponding to $g$ which is calculated as%
\begin{equation}
\begin{array}{ccc}
\nabla _{X}\xi =\left( -\frac{e^{2x}}{4}-1\right) \varphi X, & \nabla
_{\varphi X}\xi =\left( 1-\frac{e^{2x}}{4}\right) X, & \nabla _{\xi }\xi =0,
\\
\nabla _{\xi }X=\left( -\frac{e^{2x}}{4}-1\right) \varphi X, & \nabla _{\xi
}\varphi X=\left( 1+\frac{e^{2x}}{4}\right) X, & \nabla _{X}X=2y\varphi X,
\\
\nabla _{X}\varphi X=-2yX+\left( \frac{e^{2x}}{4}+1\right) \xi , & \nabla
_{\varphi X}X=\left( \frac{e^{2x}}{4}-1\right) \xi , & \nabla _{\varphi
X}\varphi X=0.%
\end{array}
\label{Levi-Civita}
\end{equation}%
By the definition of $h$, it is easy to see that
\begin{equation*}
\begin{array}{ccc}
hX=\frac{e^{2x}}{4}X, &  & h\varphi X=-\frac{e^{2x}}{4}\varphi X.%
\end{array}%
\text{ }
\end{equation*}%
Hence, $M$ is a $\left( \kappa ,\mu ,\nu \right) $-contact metric manifold
with $\kappa =1-\frac{e^{4x}}{16}$, $\mu =2\left( 1+\frac{e^{2x}}{4}\right) $%
, $\nu =2$ \cite{MP}.
\begin{example}
Let $M$ be the $\left( \kappa ,\mu ,\nu \right) $-contact metric manifold
given above and let $\gamma :I\subseteq
%TCIMACRO{\U{211d} }%
%BeginExpansion
\mathbb{R}
%EndExpansion
\rightarrow M$ be a curve parametrized by $\gamma (s)=(\ln 2,0,\frac{\sqrt{2}%
}{2}s)$, where $s$ is the arc-length parameter on an open interval $I$. The
unit tangent vector field $T$ along $\gamma $ is%
\begin{equation*}
T=-\frac{\sqrt{2}}{2}X+\frac{\sqrt{2}}{2}\varphi X.
\end{equation*}%
Since $\eta (T)=0$, the curve is Legendre. Using (\ref{Levi-Civita}), we find%
\begin{equation*}
\nabla _{T}T=-\xi ,
\end{equation*}%
which gives us $k_{1}=1$ and $\upsilon _{2}=-\xi $. Differentiating $%
\upsilon _{2}$ along the curve $\gamma $, we have%
\begin{eqnarray*}
\nabla _{T}\upsilon _{2} &=&-\sqrt{2}\varphi X \\
&=&-k_{1}T+k_{2}\upsilon _{3}.
\end{eqnarray*}%
Thus, we get%
\begin{equation*}
k_{2}=1,\text{ }\upsilon _{3}=-\frac{\sqrt{2}}{2}\left( X+\varphi X\right) .
\end{equation*}%
Finally we find%
\begin{equation*}
g\left( T,\varphi hT\right) =-1.
\end{equation*}%
From Theorem \ref{case2cproperNORMAL}, $\gamma $ has $\mathcal{C}$-proper
mean curvature vector field in the normal bundle with $\lambda =-1.$
\end{example}
\bigskip
Let $M=E(2)$ be the group of rigid motions of Euclidean $2$-space with left
invariant Riemannian metric $g$. Then $M$ admits its compatible
left-invariant contact Riemannian structure if and only if there exists an
orthonormal basis $\left\{ e_{1},e_{2},e_{3}\right\} $ of the Lie algebra
such that \cite{Perrone}:%
\[
\begin{array}{ccc}
\left[ e_{1},e_{2}\right] =2e_{3}, & \left[ e_{2},e_{3}\right] =c_{2}e_{1},
& \left[ e_{3},e_{1}\right] =0,%
\end{array}%
\]%
where we choose $c_{2}>0$. The Reeb vector field $\xi $ is obtained by left
translation of $e_{3}$. The contact distribution $D$ is spanned by $e_{1}$
and $e_{2}$. Then using Koszul's formula, we have the following relations:%
\begin{equation}
\begin{array}{cc}
\nabla _{e_{1}}e_{2}=\frac{1}{2}\left( -c_{2}+2\right) e_{3}, & \nabla
_{e_{1}}e_{3}=-\frac{1}{2}\left( -c_{2}+2\right) e_{2}, \\
\nabla _{e_{2}}e_{1}=-\frac{1}{2}\left( c_{2}+2\right) e_{3}, & \nabla
_{e_{2}}e_{3}=\frac{1}{2}\left( c_{2}+2\right) e_{1}, \\
\nabla _{e_{3}}e_{1}=\frac{1}{2}\left( c_{2}-2\right) e_{2}, & \nabla
_{e_{3}}e_{2}=-\frac{1}{2}\left( c_{2}-2\right) e_{1},%
\end{array}
\label{Levi-Civita2}
\end{equation}%
all others are zero (for more details see \cite{Perrone} and \cite{Inoguchi}%
). Let us denote by $X=e_{1}$, $\varphi X=e_{2}$, $\xi =e_{3}$. By the
definition of $h$, it is easy to see that
\begin{equation}
\begin{array}{ccc}
hX=-\frac{1}{2}c_{2}X, &  & h\varphi X=\frac{1}{2}c_{2}\varphi X.%
\end{array}%
\text{ }  \label{hX}
\end{equation}

Let $\gamma :I\rightarrow M=E(2)$ be a unit speed Legendre curve with Frenet
frame $\left\{ T=\upsilon _{1},\upsilon _{2},\upsilon _{3}\right\} $. Let us
write%
\[
T=T_{1}\xi +T_{2}X+T_{3}\varphi X.
\]%
Since $\gamma $ is Legendre, $T_{1}=0$. Using (\ref{Levi-Civita2}), we find%
\begin{eqnarray*}
\nabla _{T}T &=&-T_{2}T_{3}c_{2}\xi +T_{2}^{\prime }X+T_{3}^{\prime }\varphi
X \\
&=&k_{1}\upsilon _{2}.
\end{eqnarray*}%
If we choose $\upsilon _{2}=\xi ,$ then%
\[
k_{1}=-T_{2}T_{3}c_{2}\text{, }
\]%
and we can take $T_{2}=-\cos \theta =$constant, $T_{3}=-\sin \theta =$%
constant such that $\cos \theta \sin \theta <0$. So we have
\begin{equation}
T=-\cos \theta X-\sin \theta \varphi X.  \label{T}
\end{equation}%
Then using (\ref{nablaXksi}), (\ref{Levi-Civita2}), (\ref{hX}) and (\ref{T}%
), we can write
\begin{equation}
\nabla _{T}\upsilon _{2}=\nabla _{T}\xi =-\frac{1}{2}\sin \theta \left(
c_{2}+2\right) X+\frac{1}{2}\cos \theta \left( -c_{2}+2\right) \varphi X.
\label{E2}
\end{equation}%
Moreover $g(T,\varphi hT)=-\sin \theta \cos \theta c_{2}=k_{1}.$

\bigskip So, we can state the following example:

\begin{example}
Let\textbf{\ }$M=E(2)$ be the group of rigid motions of Euclidean $2$-space
with left invariant Riemannian metric $g$ and has a compatible
left-invariant contact Riemannian structure given above. Let $\gamma
:I\rightarrow M$ be a unit speed Legendre curve of osculating order $2$ and $%
\left\{ T=\upsilon _{1},\upsilon _{2}=\xi \right\} $ the Frenet frame of $%
\gamma .$ Then $\gamma $ is a Legendre circle with curvature $k_{1}=-\cos
\theta \sin \theta c_{2}$, where the tangent vector field of $\gamma $ is $%
T=-\cos \theta X-\sin \theta \varphi X$ and $\theta $ is a constant such
that $\sin \theta \cos \theta <0.$

\medskip Moreover, we have%
\[
g(T,\varphi hT)=-\sin \theta \cos \theta c_{2}=k_{1}.
\]%
From Theorem \ref{case1propTANGENT}, $\gamma $ has $\mathcal{C}$-proper mean
curvature vector field with $\lambda =-\sin ^{3}\theta \cos ^{3}\theta
c_{2}^{3}.$
\end{example}

Now let us assume that $\gamma :I\rightarrow M=E(2)$ is a unit speed
Legendre curve of osculating order $3$ with Frenet frame $\left\{ T=\upsilon
_{1},\upsilon _{2}=\xi ,\upsilon _{3}\right\} $. Similar to the above
example, if we choose $\upsilon _{2}=\xi ,$ we find $k_{1}=-T_{2}T_{3}c_{2}$
and we can take $T=\cos \theta X+\sin \theta \varphi X$, where $\theta $ is
a constant such that $\sin \theta \cos \theta <0.$ Define a croos product $%
\times $ by $e_{1}\times e_{2}=e_{3}$. So we have $\upsilon _{3}=T\times \xi
=\sin \theta e_{1}-\cos \theta e_{2}$. Then using (\ref{Levi-Civita2}), we
obtain
\[
\nabla _{T}\upsilon _{3}=-\frac{1}{2}\left( \cos ^{2}\theta (-c_{2}+2)+\sin
^{2}\theta (c_{2}+2)\right) e_{3},
\]%
which gives us $k_{2}=\frac{1}{2}\left( \cos ^{2}\theta (-c_{2}+2)+\sin
^{2}\theta (c_{2}+2)\right) =$constant.

Hence, we have the following example:

\begin{example}
Let\textbf{\ }$M=E(2)$ be the group of rigid motions of Euclidean $2$-space
with left invariant Riemannian metric $g$ and has a compatible
left-invariant contact Riemannian structure given above. Let $\gamma
:I\rightarrow M$ be a unit speed Legendre curve of osculating order $2$ and $%
\left\{ T=\upsilon _{1},\upsilon _{2}=\xi ,\upsilon _{3}\right\} $ the
Frenet frame of $\gamma .$ Then $\gamma $ is a Legendre helix with
curvatures $k_{1}=-\cos \theta \sin \theta c_{2}$ and  $k_{2}=\frac{1}{2}%
\left( \cos ^{2}\theta (-c_{2}+2)+\sin ^{2}\theta (c_{2}+2)\right) ,$ where
the tangent vector field of $\gamma $ is $T=\cos \theta X+\sin \theta
\varphi X$ and $\theta $ is a constant such that $\sin \theta \cos \theta <0.
$

\medskip Moreover, we have%
\[
g(T,\varphi hT)=-\sin \theta \cos \theta c_{2}=k_{1}.
\]
From Theorem \ref{case2propTANGENT}, $\gamma $ has $\mathcal{C}$-proper mean
curvature vector field in the normal bundle with $\lambda =k_{1}\left(
k_{1}^{2}+k_{2}^{2}\right) .$ Furthermore, from Theorem \ref%
{case2cproperNORMAL}, $\gamma $ has $\mathcal{C}$-proper mean curvature
vector field in the normal bundle with $\lambda =k_{1}k_{2}^{2}.$

\end{example}

%%% ENTER REFERENCES IN THE FORM

\end{document}